\newcommand{\word}[1]  {
  \mathit{#1}
}
\newcommand{\lift}[2]  {
  {\word{lift}_{#1} \; {#2}}
}
\newcommand{\oplift}[2]  {
  {\word{oplift}_{#1} \; {#2}}
}
\newcommand{\catl}[1]  {
  \mathbb{#1}
}
\newcommand{\catw}[1]  {
  \mathbf{#1}
}
\newcommand{\arr}[1]  {
  {#1}^{\rightarrow}
}
\newcommand{\arrl}[1]  {
  \arr{\catl{#1}}
}
\newcommand{\mor}[3]  {
  #1 \colon #2 \rightarrow #3
}
\newcommand{\id}[1]  {
	{\mathit{id}_{#1}}
}
\begin{document}

\title{The Other Pullback Lemma%\thanks{Grants or other notes
%about the article that should go on the front page should be
%placed here. General acknowledgments should be placed at the end of the article.}
}
%\subtitle{Do you have a subtitle?\\ If so, write it here}

%\titlerunning{Short form of title}        % if too long for running head

\author{Michal R. Przybylek    %     \and
 %       Second Author %etc.
}

\institute{Faculty of Mathematics, Informatics and Mechanics\\
University of Warsaw\\
Poland}

%\authorrunning{Short form of author list} % if too long for running head

\maketitle

\begin{abstract}
Given a composition of two commutative squares, one well-known pullback lemma says that if both squares are pullbacks, then their composition is also a pullback; another well-known pullback lemma says that if the composed square is a pullback and the supporting square is a pullback, then the other square is a pullback as well. This note  investigates the third of the possibilities.
\end{abstract}

\section{Introduction}
\label{intro}
Consider a commutative diagram:
$$\bfig
\place(200,150)[(I)]
\place(600,150)[(II)]
	\node a(0, 300)[A]
	\node b(400, 300)[B]
	\node c(800, 300)[C]
	
	\node x(0, 0)[X]
	\node y(400, 0)[Y]
	\node z(800, 0)[Z]

	\arrow[a`b;]
	\arrow[b`c;]
	
	\arrow[x`y;]
	\arrow[y`z;]

	\arrow[a`x;]
	\arrow[b`y;]
	\arrow[c`z;]
\efig$$
There are two well-known pullback lemmata\cite{maclane}:
\begin{enumerate}
\item if squares (I) and (II) are pullbacks, then the outer square is a pullback (i.e. pullbacks compose)
\item if the outer square and the right square (II) are pullbacks, then square (I) is a pullback
\end{enumerate}
The third possibility: ``if the outer square and the left square (I) are pullbacks, then the right square (II) is a pullback'', which we shall call ,,the other pullback lemma'', generally does not hold. 
More interestingly, the other pullback lemma does not have to hold even in case the left bottom morphism is an epimorphism.
\begin{example}[Counterexample to the other pullback lemma]
Let $\catw{Graph}$ be the category of simple graphs and graph homomorphisms. Consider an injective-on-nodes homomorphism ${\mor{e}{|2|}{2}}$ from the discrete graph $|2|$ consisting of two nodes, to the graph $2$ consisting of a single edge (between two distinct nodes). Homomorphism $e$ is clearly an epimorphism, but in the following situation:
$$\bfig
	\node a(0, 300)[|2|]
	\node b(400, 300)[|2|]
	\node c(800, 300)[2]
	
	\node x(0, 0)[|2|]
	\node y(400, 0)[2]
	\node z(800, 0)[2]

	\arrow[a`b;\id{|2|}]
	\arrow[b`c;e]
	
	\arrow[x`y;e]
	\arrow[y`z;\id{2}]

	\arrow[a`x;\id{|2|}]
	\arrow[b`y;e]
	\arrow[c`z;\id{2}]
\efig$$
the right square is not a pullback square, but both the left an the outer ones are pullback diagrams.
\end{example} 
Even more interestingly, the other pullback lemma does not have to hold in case the left bottom morphism is an equationally-defined epimorphism\footnote{Such epimorphisms are called regular epimorphisms.}.
\begin{example}[Another counterexample to the other pullback lemma]\label{e:another}
Let $\catw{Pos}$ be the category of partially-ordered sets and monotonic functions. Denote by $n$ the initial segment of natural numbers consisting of $n$ elements ${\{0 \leq \cdots \leq n-1\}}$. Consider functions $\mor{f, g}{1}{2 \sqcup 2}$ defined as $f(0) = 1$ and $g(0) = 0'$, where $2 \sqcup 2 = \{0 \leq 1,0' \leq 1' \}$. The coequalisator $3$ of $f$ an $g$ gives an epimorphism $\mor{e}{2 \sqcup 2}{3}$, which identifies $1$ with $0'$: i.e.~${e = \{0 \mapsto 0, 1\mapsto 1, 0' \mapsto 1, 1' \mapsto 2\}}$. The other pullback lemma is does not hold over $e$. In the diagram:
$$\bfig
	\node a(0, 400)[|2|]
	\node b(800, 400)[|2|]
	\node c(1600, 400)[2]
	
	\node x(0, 0)[2 \sqcup 2]
	\node y(800, 0)[3]
	\node z(1600, 0)[3]

	\arrow[a`b;\id{|2|}]
	\arrow[b`c;\{0 \mapsto 0, 1 \mapsto 1\}]
	
	\arrow|b|[x`y;e]
	\arrow|b|[y`z;\id{3}]

	\arrow[a`x;\id{|2|}]
	\arrow|m|[b`y;\{0 \mapsto 0, 1 \mapsto 2\}]
	\arrow|m|[c`z;\{0 \mapsto 0, 1 \mapsto 2\}]
\efig$$
where $|2|$ is the discrete poset on two elements $\{0, 1\}$, the right square is not a pullback square, but both the left square and the outer square are pullbacks.
\end{example}

The aim of this note is to investigate the other pullback lemma. Here is the formal definition.
\begin{definition}[The other pullback lemma]\label{d:the:other:pullback}
Let $\catl{B}$ be a category and ${\mor{e}{X}{Y}}$ a morphism in $\catl{B}$ such that all pullbacks along $e$ exist. We say that $e$ satisfies the other pullback lemma (or that the other pullback lemma is satisfied along ${\mor{e}{X}{Y}}$) if for any commutative squares:
$$\bfig
\place(200,150)[(I)]
\place(600,150)[(II)]
	\node a(0, 300)[A]
	\node b(400, 300)[B]
	\node c(800, 300)[C]
	
	\node x(0, 0)[X]
	\node y(400, 0)[Y]
	\node z(800, 0)[Z]

	\arrow[a`b;]
	\arrow[b`c;]
	
	\arrow[x`y;e]
	\arrow[y`z;]

	\arrow[a`x;]
	\arrow[b`y;]
	\arrow[c`z;]
\efig$$
the following holds: if the outer square and the left square (I) are pullbacks, then the right square (II) is a pullback. 
\end{definition}
Prompted by MathOverflow question\footnote{Ma Ming, \emph{Two pullback diagram}, \url{http://mathoverflow.net/questions/143070/two-pullback-diagram}.} and expanding the comment provided by the author of this note, we provide sufficient and necessary conditions on morphism $\mor{e}{X}{Y}$ such that the other pullback lemma along $\mor{e}{X}{Y}$ holds. 

The note is structured as follows. Section \ref{sec:preliminaries} recalls various notions of epimorphisms (i.e.~extremal epimorphism, strong epimorphism, regular epimorphism) they basic properties and relations that hold between them. In the section we also state our main theorem (Theorem \ref{t:the:other:pullback:lemma}) and discusses some of its corollaries. However, we postpone the proof of the theorem until Section \ref{sec:proof}. The aim of Section \ref{sec:characterisation} is to provide an alternative, more natural, setting for pullback lemmata. Section \ref{sec:proof} gives a proof of a generalisation of our main theorem in the setting provided by Section \ref{sec:characterisation}. %Conclusions are presented in section \ref{sec:conclusions}.  

\section{Preliminaries}
\label{sec:preliminaries}
Let us recall \cite{borceux} that a morphism $\mor{e}{X}{Y}$ is called \emph{strong} (left orthogonal) if for every commutative square:
$$\bfig
	\node a(0, 500)[X]
	\node b(500, 500)[Y]
	
	\node c(0, 0)[A\;]
	\node d(500, 0)[B]

	\arrow[a`b;e]
	\arrow/-->/[b`c;d]
	
	\arrow/>->/[c`d;m]
	\arrow[a`c;f]
	\arrow[b`d;g]
\efig$$
with a monomorphism $\mor{m}{A}{B}$, there exists a (necessarily unique) diagonal $\mor{d}{Y}{A}$ such that $d \circ e = f$ and $m \circ d = g$. If $\mor{e}{X}{Y}$ is strong then it factros through no proper subobject: i.e.~if $e = m \circ f$, where $m$ is a monomorphism, then $m$ is an isomorphism. This is due to the fact, that the diagonal of the square induced by the decomposition $e = m \circ f$  makes monomorphism $m$ a split epimorphism. The converse is not generally true. However, it is true on condition the category has enough pullbacks. For let us assume that we are given a diagram like in the above, where $\mor{e}{X}{Y}$ factors through no proper subobjects, and our task is to construct a diagonal $\mor{d}{Y}{A}$. Let us build the pullback of $m$ along $g$:
$$\bfig
	\node a(-300, 600)[X]

	\node p(0, 300)[m^*(Y)\;]
	\node b(400, 300)[Y]
	
	\node c(0, 0)[A\;]
	\node d(400, 0)[B]

	\arrow/{@{>}@/^1em/}/[a`b;e]
	\arrow|l|/{@{>}@/_1em/}/[a`c;f]
	\arrow/-->/[a`p;!\exists h]

	\arrow/>->/[p`b;\pi_Y]
	
	\arrow/>->/[c`d;m]
	\arrow[p`c;\pi_A]
	\arrow[b`d;g]
\efig$$
Because monomorphisms are stable under pullbacks, $\pi_Y$ is also a monomorphism. Therefore, it is an isomorphism, since $e$ is extremal. One may then construct the diagonal $\mor{d}{Y}{A}$ as the composition $\pi_A \circ \pi_Y^{-1}$. The lower triangle commutes because: $m \circ \pi_A \circ \pi_Y^{-1} = g \circ \pi_Y \circ \pi_Y^{-1} = g$. To see that the upper triangle commutes, let us precompose $\pi_A \circ \pi_Y^{-1} \circ e$ with $m$; then $m \circ \pi_A \circ \pi_Y^{-1} \circ e = g \circ e = m \circ f$, and since $m$ is mono: $\pi_A \circ \pi_Y^{-1} \circ e = f$.

Morphisms that factors through no proper subobjects are called \emph{extremal morphisms}. Let us also recall that if a category has equalisers, every extremal morphism (therefore, every strong morphism) $\mor{e}{X}{Y}$ is an epimorphism because of the following observation. For every pair of morphisms $\mor{a,b}{Y}{Z}$ one may form an equaliser:
$$\bfig
	\node x(300, 300)[X]
	\node a(600, 0)[Y]
	\node b(1000, 0)[Z]
	\node e(50, 0)[\mathit{Eq}(a,b)\;]
	
	\arrow/{@{>}@/^0.5em/}/[a`b;a]
	\arrow|b|/{@{>}@/_0.5em/}/[a`b;b]
	\arrow/>>/[x`a;e]
	\arrow/>->/[e`a;m]
	\arrow/-->/[x`e;!\exists h]
\efig$$
obtaining a regular monomorphism $\mor{m}{\mathit{Eq}(a,b)}{Y}$. If $a \circ e = b \circ e$ then by the property of the equaliser, there exists a factorisation $e = m \circ h$; since $e$ is extremal, $m$ has to be an isomorphism, and so: $x = y$.
%One may think of an extremal epimorphism $\mor{e}{A}{B}$ as of a weak notion of an internally surjective function

%On condition the fibre $\catl{C}$ has equalisers, the following lemma is the converse of Lemma \ref{l:cons:extremal}.
%\begin{lemma}\label{l:extremal:cons}
%Let $\mor{F}{\catl{D}}{\catl{C}} \vdash \mor{G}{\catl{C}}{\catl{D}}$ be an adjunction with $G$ right adjoint to $F$. If the counits $\mor{\epsilon_A}{F(G(A))}{A}$ of the adjunction are extremal epimorphisms, then right adjoint $G$ is conservative.
%\end{lemma}
%\begin{proof}
%We have to show that for every $\mor{f}{A}{B}$ if $G(f)$ is an isomorphism then $f$ is an isomorphism. Since any functor preserves isomorphisms if $G(f)$ is an isomorphism, then $F(G(f))$ is an isomorphism. By definition of the adjunction $f \circ \epsilon_A = \epsilon_B \circ F(G(f))$, so $f \circ \epsilon_A \circ F(G(f))^{-1} = \epsilon_B$, hence $f$ is extremal. Indeed, if $f = m \circ h$ then $\epsilon_B = m \circ h \circ \epsilon_A \circ F(G(f))^{-1}$ so if $m$ is a monomorphism, $m$ has to be an isomorphism. On the other hand, each counit is also an epimorphism, so $G$ reflects monomorphisms. Therefore, if $G(f)$ is an isomorphism, then $f$ is both extremal and monomorphism, which means, it is an isomorphism.
%\end{proof}

%\begin{corollary}
%Let $\mor{G}{\catl{C}}{\catl{D}}$ be a right adjoint functor from a category $\catl{C}$ with equalisers. $G$ is conservative iff the counits of the adjunction are extremal morphisms.
%\end{corollary}

Finally, every regular epimorphism (i.e.~an epimorphism that appears as a coequaliser) is strong. Let us assume that a morphism $\mor{e}{X}{Y}$ coequalises a pair of morphisms $\mor{x,y}{Z}{X}$ and consider the diagram for orthogonality:
$$\bfig
	\node x(-400, 400)[Z]
	\node a(0, 400)[X]
	\node b(400, 400)[Y]
	
	\node c(0, 0)[A\;]
	\node d(400, 0)[B]

	\arrow/>>/[a`b;e]
	\arrow/-->/[b`c;\exists ! d]
	
	\arrow/>->/[c`d;m]
	\arrow[a`c;f]
	\arrow[b`d;g]

	\arrow/{@{>}@/^0.5em/}/[x`a;x]
	\arrow|b|/{@{>}@/_0.5em/}/[x`a;y]
\efig$$ 
Because $g \circ e = m \circ f$ and $\mor{e}{A}{B}$ coequalises $\mor{x,y}{Z}{X}$, we have that $m \circ f \circ x = m \circ f \circ y$. However, $m$ is a monomorphism, therefore $f \circ x = f \circ y$, and by the universal property of coequaliser $e$, there exists a unique morphism $\mor{d}{Y}{A}$ such that $d \circ e = f$. Because $e$ is an epimorphism the condition $m \circ d = g$ is equivalent to condition $m \circ d \circ e = g \circ e$, and by commutativity of the square to $m \circ d \circ e = m \circ f$, which is obviously true, and moreover, equivalent to the condition $d \circ e = f$.

We say that a regular (strong, extremal, epi) morphism $\mor{e}{X}{Y}$ is stable under pullbacks, if for every morphism $\mor{f}{Z}{Y}$ the pullback morphism $P \rightarrow Z$ is regular (resp.~strong, extremal, epi).

The next theorem is our main result.
\begin{theorem}[The other pullback lemma]\label{t:the:other:pullback:lemma}
Let $\mor{e}{X}{Y}$ be a morphism in a category with pullbacks\footnote{A careful reader may follow the proof presented in Section \ref{sec:proof} to see that we do not need \emph{all} pullbacks.}. The following conditions are equivalent:
\begin{itemize}
\item the other pullback lemma along $\mor{e}{X}{Y}$ holds 
\item $\mor{e}{X}{Y}$ is a strong morphism stable under pullbacks.
\item $\mor{e}{X}{Y}$ is an extremal morphism stable under pullbacks.
\end{itemize}
\end{theorem}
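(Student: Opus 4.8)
The plan is to prove the three-way equivalence by establishing a cycle of implications, using the fact (already recalled in the Preliminaries) that, in a category with pullbacks, a morphism is strong if and only if it is extremal. This immediately collapses the last two bullet points into one, so the real content is the equivalence between ``the other pullback lemma along $e$ holds'' and ``$e$ is extremal (equivalently strong) and stable under pullbacks''.

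For the direction ``other pullback lemma $\Rightarrow$ extremal and pullback-stable'', I would test the hypothesis on carefully chosen diagrams. To see that $e$ must be extremal, suppose $e = m \circ f$ with $\mor{m}{A}{Y}$ a monomorphism; I would build a diagram whose left square is a pullback (using identities on the top-left corner, so that the left square is trivially a pullback) and whose outer rectangle is a pullback, arranged so that forcing square (II) to be a pullback pins $m$ down as an isomorphism — concretely, take $B = A$, $C = Y$, $Z = Y$ with the right-hand vertical $m$, bottom row $e$ then $\id{Y}$, and check that square (II) being a pullback says exactly that $m$ is iso. To see pullback-stability, I would start from an arbitrary $\mor{g}{Z}{Y}$, form the pullback $P$ of $e$ along $g$, and feed the pullback square together with an appropriate left square into the hypothesis to conclude the pulled-back leg $\mor{P}{Z}$ is again extremal; here the composition/cancellation lemmas for pullbacks (bullets 1 and 2 of the Introduction) do the bookkeeping.

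For the converse, ``extremal and pullback-stable $\Rightarrow$ other pullback lemma'', suppose squares (I) and the outer rectangle are pullbacks and we must show (II) is a pullback. I would form the pullback $Q$ of $\mor{Z \leftarrow C}{}{}$... more precisely the pullback $P$ of the cospan $\mor{B}{Y}{} \leftarrow \mor{C}{Z}{} \to$ ... i.e.\ the limit defining square (II)'s pullback, obtaining a comparison morphism $\mor{t}{B}{P}$; the goal is that $t$ is an isomorphism. Using that square (I) is a pullback and the outer rectangle is a pullback, a diagram chase (invoking the two classical pullback lemmata) shows that the pullback of $t$ along the pullback-of-$e$ map $\mor{P'}{}{}$ ... is an isomorphism, and that $t$ itself is a monomorphism. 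Then, because $e$ is extremal and stable under pullbacks, the pulled-back copy of $e$ over $P$ is an extremal epimorphism, and $t$ being mono with an extremal-epi ``section-like'' relationship forces $t$ to be iso — this is where extremality (``factors through no proper subobject'') is used in its sharpest form, combined with the monicity of $t$.

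The main obstacle I anticipate is the converse direction: correctly identifying which pullback squares to build so that pullback-stability of $e$ bears on the comparison map $t\colon B \to P$, and then squeezing out that $t$ is simultaneously monic and ``extremally covered'', so that extremality forces it to be invertible. In particular one has to be careful that the pullback of $e$ one wants to use genuinely sits over the relevant corner (this is exactly why pullback-stability, not just extremality of $e$, is needed, as the counterexamples in the Introduction show). The bookkeeping is largely an application of the two standard pullback lemmata, but getting the diagram right is the delicate part; the cleaner account, as the introduction promises, will be given in the more natural framework of Section \ref{sec:characterisation}, and the proof here can be viewed as an unwinding of that general argument.
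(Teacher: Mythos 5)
Your overall plan is sound and is, as you anticipate, essentially an unwinding of the paper's fibrational argument (conservativity of the pullback functor $\catl{B}/Y\to\catl{B}/X$ and extremality of its counits). In the forward direction your extremality diagram does work, with one caveat: the left square there is a pullback not ``trivially'' but because $e$ factors through the monomorphism $m$ (the induced map into the pullback splits a mono), and the same mono-splitting observation --- rather than the two classical pullback lemmata --- is what makes the ``appropriate left square'' for pullback-stability a pullback: given the pullback $\mor{e'}{P}{Z}$ of $e$ along $\mor{g}{Z}{Y}$ and a factorisation $e'=m\circ h$ with $m$ mono, take top row $h$ then $m$, bottom row $e$ then $\id{Y}$, and middle vertical $g\circ m$; then OPL forces $m$ to be an isomorphism.

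The genuine gap is in the converse, at the step claiming that ``$t$ itself is a monomorphism'' follows from squares (I) and the outer rectangle being pullbacks by a chase with the two classical lemmata. That implication is false: for $\mor{e}{\emptyset}{Y}$ in $\catw{Set}$, any commuting right-hand square padded on the left with $A=\emptyset$ has (I) and the outer rectangle pullbacks, yet the comparison $t$ is arbitrary (take $B=2$ and $C=Y=W=1$, so $t$ is the non-monic map $2\to 1$); the paper's two counterexamples exhibit the same phenomenon. Monomorphy of $t$ is exactly the second place where the hypothesis on $e$ must enter: pullbacks of $e$ are extremal, and since the slice $\catl{B}/Y$ has equalisers (slices of a category with pullbacks have finite limits) they are epimorphisms \emph{in the slice}; cancelling such an epimorphism --- equivalently, the reindexing functor is faithful because its counits are epi, hence reflects monos, as in Lemma \ref{l:extremal:cons} --- is what yields $tu=tv\Rightarrow u=v$. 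For the same reason you should not call the pulled-back copy of $e$ an extremal \emph{epimorphism} of $\catl{B}$: extremal implies epi only in the presence of equalisers, which $\catl{B}$ itself is not assumed to have; passing to the slice is the fix the paper's final corollary makes. Once $t$ is known to be monic, your ending simplifies: by the outer pullback and the classical cancellation lemma, $t\circ a$ is the pullback of $e$ along $\pi_Y$, hence extremal, and it factors through the mono $t$, so $t$ is an isomorphism --- no pullback of $t$ is needed at all.
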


A category with finite limits is called \emph{regular} if every morphism $\mor{f}{X}{Y}$ factors as an extremal morphism $\mor{e}{X}{f[X]}$ followed by a monomorphism $\mor{j}{f[X]}{Y}$, and extremal morphisms are stable under pullbacks. Examples of regular categories include: elementary toposes, pretoposes, quasitoposes, abelian categories, the category of compact Hausdorff spaces, the category of groups, and more generally, any category monadic over $\catw{Set}$. On the other hand, Example~\ref{e:another} shows that the category of posets $\catw{Pos}$ is not regular, because the regular epimorphism $\mor{e}{2 \sqcup 2}{3}$ does not satisfy the other pullback lemma, therefore cannot be stable under pullbacks.

Because in a regular category every extremal morphism is a regular epimorphism (Lemma 4.4.6 in \cite{fib}, or Proposition 2.1.4 together with Proposition 2.2.2 in Vol.~2 \cite{borceux}), we may write the following corollary. 
\begin{corollary}
Let $e$ be a morphism in a regular category. The other pullback lemma holds along $e$ if and only if $e$ is a regular epimorphism.
\end{corollary}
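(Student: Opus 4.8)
The plan is to deduce the corollary directly from Theorem~\ref{t:the:other:pullback:lemma} by showing that, in a regular category, the class of ``extremal morphisms stable under pullbacks'' coincides with the class of regular epimorphisms. One inclusion is immediate from the definition of regular category: every extremal morphism is stable under pullbacks, so every extremal morphism trivially satisfies the ``stable under pullbacks'' clause, and by the cited result (Lemma 4.4.6 in \cite{fib}, or Proposition 2.1.4 together with Proposition 2.2.2 of \cite{borceux}) every extremal morphism in a regular category is in fact a regular epimorphism. Hence if $e$ is extremal and stable under pullbacks --- equivalently, by the theorem, if the other pullback lemma holds along $e$ --- then $e$ is a regular epimorphism.

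For the converse I would argue that in a regular category every regular epimorphism is extremal and stable under pullbacks, so that the third bullet of Theorem~\ref{t:the:other:pullback:lemma} applies. That every regular epimorphism is strong (hence extremal) was already established in Section~\ref{sec:preliminaries} and uses only that the category has equalisers. For stability under pullbacks one uses the regular (extremal, mono)-factorisation: given $\mor{e}{X}{Y}$ a regular epimorphism and an arbitrary $\mor{f}{Z}{Y}$, form the pullback $\mor{\pi}{P}{Z}$ of $e$ along $f$, then factor $\pi = j \circ e'$ with $e'$ extremal and $j$ a monomorphism, and show $j$ is an isomorphism. The standard way is to pull $j$ back along $f$ (or rather to observe that $e'$, being extremal, is itself a regular epimorphism by the same cited result, and then use a diagram chase with the defining coequaliser pair of $e$), concluding that $j$ must be invertible because $e$ factors through no proper subobject after transport along the pullback; this is exactly the content needed, and it is where I would spend the most care.

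The main obstacle, and the only non-formal point, is precisely verifying that the pullback of a regular epimorphism along an arbitrary morphism in a regular category is again (extremal, equivalently regular) epi. I would not reprove this from scratch; instead I would cite it as a standard property of regular categories (it is part of Proposition 2.2.2 and the surrounding development in Vol.~2 of \cite{borceux}), since the theorem only requires ``extremal + stable under pullbacks'' and regularity gives stability of extremal morphisms by definition together with the coincidence of extremal and regular epimorphisms. Thus the corollary reduces to a short chain of implications, each step either definitional or one of the two cited facts, with no new calculation required.
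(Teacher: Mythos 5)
Your proof is correct and follows essentially the same route as the paper: apply Theorem~\ref{t:the:other:pullback:lemma} and identify, in a regular category, the extremal morphisms stable under pullbacks with the regular epimorphisms, using the cited fact that every extremal morphism there is a regular epimorphism and that regular epimorphisms are strong (hence extremal). The only remark is that your sketched factorisation argument for pullback-stability is superfluous: as you yourself note at the end, the paper's definition of regular category already postulates that extremal morphisms are stable under pullbacks, so once $e$ is known to be extremal the stability clause of the theorem is automatic.
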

The above corollary appears as Lemma 1.3.2 in \cite{elephant}, but it can also be obtained by a transfer principle between extremal epimorphisms of regular categories and epimorphisms of Grothendieck toposes (see Metatheorem 2.7.4 in \cite{borceux}). 

A category is called \emph{balanced} if every morphism that is both a monomorphism and an epimorphism is an isomorphism. A more sophisticated description of a balanced category is the following: every epimorphism is extremal. Clearly, if every epimorphism is extremal, then the category is balanced. In the other direction, let us assume that a category is balanced, and an epimorphism $e$ factors as $m \circ h$, where $m$ is a monomorphisms. By the definition of an epimorphism, $m$ is also an epimorphism, and since the category is balanced, it is an isomorphism. Therefore, $e$ is extremal.

\begin{corollary}
Let $e$ be a morphism in a balanced regular category. The other pullback lemma holds along $e$ if and only if $e$ is an epimorphism.
\end{corollary}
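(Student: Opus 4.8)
The plan is to derive the statement directly from the preceding corollary together with the characterisation of balanced categories recalled just above it; the argument is essentially a chaining of equivalences rather than anything substantially new.

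First I would invoke the preceding corollary, which already says that in a regular category the other pullback lemma holds along $e$ if and only if $e$ is a regular epimorphism. Since every regular epimorphism is an epimorphism, the forward implication is then immediate: if the other pullback lemma holds along $e$, then $e$ is a regular epimorphism, hence an epimorphism.

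For the converse I would use the balancedness hypothesis. As observed in the excerpt, a category is balanced precisely when every epimorphism is extremal. So if $e$ is an epimorphism, it is in particular extremal; and since the ambient category is regular, every extremal morphism is a regular epimorphism (Lemma 4.4.6 in \cite{fib}, or Proposition 2.1.4 together with Proposition 2.2.2 in Vol.~2 of \cite{borceux}). Hence $e$ is a regular epimorphism, and the preceding corollary gives that the other pullback lemma holds along $e$.

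Alternatively, one can bypass the preceding corollary and appeal to Theorem \ref{t:the:other:pullback:lemma} directly: the other pullback lemma holds along $e$ iff $e$ is extremal and stable under pullbacks; in a regular category extremal morphisms are automatically stable under pullbacks, so this reduces to ``$e$ is extremal'', which in a balanced category is equivalent to ``$e$ is an epimorphism'' --- one direction being exactly the balanced hypothesis, the other the standard fact recalled in Section \ref{sec:preliminaries} that in a category with equalisers every extremal morphism is an epimorphism. I do not expect a genuine obstacle here; the only point that needs care is to note that a regular category has all finite limits, hence equalisers, so that the passage from ``extremal'' to ``epimorphism'' is an equivalence and not merely one implication.
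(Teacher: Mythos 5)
Your proposal is correct and follows essentially the same route the paper intends: the corollary is stated without a separate proof precisely because it is the chaining of the preceding corollary (regular category: other pullback lemma $\Leftrightarrow$ regular epimorphism) with the observation, made just above it, that balanced means every epimorphism is extremal, together with the cited fact that extremal morphisms in a regular category are regular epimorphisms. Your alternative route via Theorem \ref{t:the:other:pullback:lemma} is also fine and only repackages the same ingredients.
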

Every topos is balanced, because every monomorphism in a topos is regular, as a pullback of a regular monomorphism\footnote{Recall that a regular monomorphism is a monomorphism that appears as an equaliser; since limits are stable under pullbacks it follows that regular monomorphisms are stable under pullbacks.} $\mor{\top}{1}{\Omega}$ along the characteristic map. More generally, every pretopos (but not quasitopos!) is balanced, because in a pretopos every monomorphism equalises its cokernel pair\footnote{A cokernel pair of a morphism $\mor{f}{A}{B}$ is the pair of morphisms $B \rightarrow C$ obtained by forming the pushout $C$ of $f$ along itself.} (Proposition 5.8.10 in \cite{paul}).

\section{Another characterisation}
\label{sec:characterisation}
The aim of this section is to reformulate pullback lemmata in a more natural geometric language. Let us first recall some basic definitions \cite{borceux}\cite{fib}.

\begin{definition}[Cartesian morphism]\label{d:cartesian}
Let $\mor{p}{\catl{C}}{\catl{B}}$ be a functor. A morphism  $\mor{f}{B}{C}$ in $\catl{C}$ is $p$-cartesian (or just cartesian), if for every morphism $\mor{g}{A}{C}$ in $\catl{C}$ and every decomposition $p(g) = p(f) \circ k$ in $\catl{B}$ there is exactly one morphism $\mor{h}{A}{B}$ such that $p(h) = k$ and $g = f \circ h$:
$$\bfig
   \node c(-400, 600)[\catl{C}]
	\node b(-400, 300)[\catl{B}]
	\arrow[c`b;p]

	\node x(0, 900)[A]
	\node y(400, 600)[B]
	\node z(1000, 600)[C]

	\arrow|l|/-->/[x`y;h]
   \arrow|l|/->/[y`z;f]
   \arrow/{@{>}@/^1em/}/[x`z;g]

   \node i(0, 300)[p(A)]
	\node j(400, 0)[p(B)]
	\node k(1000, 0)[p(C)]

	\arrow|l|[i`j;k]
   \arrow|l|[j`k;p(f)]
   \arrow/{@{>}@/^1em/}/[i`k;p(g)]
\efig$$
\end{definition}
From the universal property of cartesian morphisms it follows that cartesian morphisms compose.
\begin{corollary}\label{c:cart:compose}
Let $\mor{p}{\catl{C}}{\catl{B}}$ be a functor. If both morphisms $\mor{f}{A}{B}$ and $\mor{g}{B}{C}$ are $p$-cartesian, then their composition $\mor{g \circ f}{A}{C}$ is cartesian as well.
\end{corollary}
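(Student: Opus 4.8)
The plan is to verify the defining universal property of a $p$-cartesian morphism directly for $g \circ f$, using the universal properties of $f$ and $g$ in turn. So suppose we are given an arbitrary morphism $\mor{u}{D}{C}$ in $\catl{C}$ together with a decomposition $p(u) = p(g \circ f) \circ k = p(g) \circ p(f) \circ k$ in $\catl{B}$; I must produce a unique $\mor{h}{D}{A}$ with $p(h) = k$ and $u = (g \circ f) \circ h$.

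First I would regroup the decomposition as $p(u) = p(g) \circ \bigl(p(f) \circ k\bigr)$ and apply the cartesianness of $g$: this yields a unique $\mor{v}{D}{B}$ with $p(v) = p(f) \circ k$ and $u = g \circ v$. Next I would feed $v$ and the decomposition $p(v) = p(f) \circ k$ into the cartesianness of $f$: this yields a unique $\mor{h}{D}{A}$ with $p(h) = k$ and $v = f \circ h$. Composing, $u = g \circ v = g \circ f \circ h$ and $p(h) = k$, so $h$ has the required properties, establishing existence.

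For uniqueness, suppose $\mor{h'}{D}{A}$ also satisfies $p(h') = k$ and $u = (g \circ f) \circ h'$. Setting $v' = f \circ h'$, we have $p(v') = p(f) \circ k$ and $g \circ v' = u$, so by the uniqueness clause in the universal property of $g$ we get $v' = v$; then $f \circ h' = v = f \circ h$ with $p(h') = k = p(h)$, so by the uniqueness clause for $f$ we conclude $h' = h$.

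There is essentially no obstacle here — this is the standard ``cartesian morphisms compose'' argument, a routine two-step diagram chase. The only point requiring minor care is bookkeeping the decompositions in $\catl{B}$ correctly (associativity of composition and matching $p$ of each newly constructed arrow to the intended base morphism) so that each invocation of a universal property is applied to data of the right shape.
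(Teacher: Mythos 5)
Your proof is correct and is exactly the standard two-step argument the paper has in mind: the paper states this corollary as an immediate consequence of the universal property and omits the details, and your diagram chase (lift along $g$ first, then along $f$, with uniqueness traced back through both universal properties) fills them in properly.
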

Furthermore, cartesian morphisms are stable under factorisations.
\begin{corollary}\label{c:cart:factors}
Let $\mor{p}{\catl{C}}{\catl{B}}$ be a functor. If a $p$-cartesian morphism ${\mor{f}{A}{B}}$ factors as a morphism ${\mor{c}{A}{C}}$ followed by a $p$-cartesian morphism ${\mor{g}{C}{B}}$, then ${\mor{c}{A}{C}}$ is also a $p$-cartesian morphism.
\end{corollary}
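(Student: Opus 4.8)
The plan is to verify the defining universal property of Definition~\ref{d:cartesian} for $\mor{c}{A}{C}$ by hand, feeding the data through the universal properties of $f$ and of $g$. Write $f = g \circ c$. Suppose we are given a morphism $\mor{h}{D}{C}$ in $\catl{C}$ together with a decomposition $p(h) = p(c) \circ k$ in $\catl{B}$; the task is to produce a \emph{unique} $\mor{\ell}{D}{A}$ with $p(\ell) = k$ and $h = c \circ \ell$.

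First I would manufacture $\ell$ using the cartesianness of $f$. Post-composing $h$ with $g$ yields $\mor{g \circ h}{D}{B}$, and since $f = g \circ c$ one computes the decomposition $p(g \circ h) = p(g) \circ p(h) = p(g) \circ p(c) \circ k = p(f) \circ k$ in $\catl{B}$. Applying the universal property of the $p$-cartesian morphism $f$ to the morphism $g \circ h$ and this decomposition produces a unique $\mor{\ell}{D}{A}$ with $p(\ell) = k$ and $g \circ h = f \circ \ell = g \circ (c \circ \ell)$.

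It then remains to promote the identity $g \circ h = g \circ (c \circ \ell)$ to the identity $h = c \circ \ell$ itself, and this is the one place where the cartesianness of $g$ is used — and essentially the only step that requires any thought. Both $h$ and $c \circ \ell$ are morphisms $D \to C$ whose composite with $g$ equals $g \circ h$, and both sit over the same arrow of $\catl{B}$, namely $p(h) = p(c) \circ k = p(c \circ \ell)$ (using $p(\ell) = k$ for the last equality); since $g$ is $p$-cartesian, the factorisation of $g \circ h$ through $g$ over this arrow is unique, forcing $h = c \circ \ell$. Uniqueness of $\ell$ is then immediate: any $\ell'$ with $p(\ell') = k$ and $c \circ \ell' = h$ satisfies $f \circ \ell' = g \circ c \circ \ell' = g \circ h$, hence $\ell' = \ell$ by the uniqueness clause already invoked for $f$. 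Everything outside the $g$-cartesian step is a mechanical unwinding of the definitions, so I expect no real obstacle; the statement is simply the ``two out of three'' reading of Corollary~\ref{c:cart:compose} for cartesian arrows.
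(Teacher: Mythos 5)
Your argument is correct and is essentially identical to the paper's proof: both construct the required factorisation by applying the universal property of the cartesian morphism $f$ to $g$ composed with the given morphism, then use uniqueness of factorisations through the cartesian morphism $g$ to verify the triangle, and derive uniqueness by composing back with $g$ and invoking the uniqueness clause for $f$. Only the notation differs.
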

\begin{proof}
Let $\mor{s}{X}{C}$ be any morphism in $\catl{C}$ and $\mor{k}{p(X)}{p(A)}$ be a morphism in $\catl{B}$ satisfying $p(c) \circ k = p(s)$. By the universal property of cartesian morphism $\mor{f}{A}{B}$ there is a unique $\mor{h}{X}{A}$ over $\mor{k}{p(X)}{p(A)}$ such that $f \circ h = g \circ s$. Observe that $c \circ h = s$, because both $c \circ h$ and $s$ are factorisations of $f \circ h$ through cartesian morphism $g$. Finally, $h$ is unique with this property, since if $s$ factors through $k$ as any $\mor{h}{X}{A}$ then by composition it also factors through the cartesian morphism $f$.  
\end{proof}
If we denote by $\arrl{B}$ the arrow category over a category $\catl{B}$, than we call the codomain projection functor:
$$\mor{\word{cod}(\catl{B})}{\arrl{B}}{\catl{B}}$$
the fundamental indexing of $\catl{B}$. The point is that cartesian morphism in the fundamental indexing of $\catl{B}$ are precisely the pullbacks in $\catl{B}$. Therefore, the pullback lemmata  may by abstractly rephrased as Corollary \ref{c:cart:compose} and Corollary \ref{c:cart:factors}. In order to characterise the other pullback lemma, we have to introduce a new concept.
\begin{definition}[Conservatively cartesian morphism]\label{d:conservative:cartesian}
Let $\mor{p}{\catl{C}}{\catl{B}}$ be a functor. A morphism $\mor{f}{A}{B} \in \catl{C}$ is $p$-conservatively cartesian (or just conservatively cartesian) if for every morphism $\mor{g}{B}{C}$ the following holds: $f \circ g$ is cartesian iff $g$ is cartesian.
\end{definition}
We will be interested in morphisms $\mor{e}{X}{Y}$ such that there are ``enough'' cartesian morphisms $f$ that lie over $e$ (i.e.~$p(f) = e$), and such that every cartesian morphism that lies over $e$ is conservatively cartesian. The next definitions will be useful.
\begin{definition}[Cartesian liftings]\label{d:lifting}
Let $\mor{p}{\catl{C}}{\catl{B}}$ be a functor. For an object $B \in \catl{C}$ and a morphism $\mor{j}{X}{p(B)} \in \catl{B}$, we say that a morphism $\mor{\lift{B}{j}}{A}{B}$ is a cartesian lifting of $j$ if it is cartesian and it lies over $j$: i.e.~ $p(\lift{B}{j}) = j$, like on the following diagram:
$$\bfig
   \node c(0, 300)[\catl{C}]
	\node b(0, 0)[\catl{B}]
	\arrow[c`b;p]

	\node y(300, 300)[A]
	\node z(900, 300)[B]

	 \arrow/-->/[y`z;\lift{B}{j}]
  
   \node j(300, 0)[X]
	\node k(900, 0)[p(B)]
	
   \arrow[j`k;j]
\efig$$
Furthermore, we say that $\mor{j}{X}{Y}$ has cartesian liftings, if for every $B \in \catl{C}$ such that $p(B) = Y$ there is a cartesian lifting of $j$.
\end{definition}

\begin{definition}[Fibration]\label{d:fibration}
A functor $\mor{p}{\catl{C}}{\catl{B}}$ is a fibration if every morphism $\mor{j}{X}{Y}$ has cartesian liftings.
\end{definition}
At this point, let us also recall that a morphism $\mor{j}{X}{Y}$ that has cartesian liftings induces a functor from the fibre over $Y$ to the fibre over $X$. More formally, for $Z \in \catl{B}$ let $p_Z$ denote the subcategory of $\catl{C}$ consisting of objects $A$ such that $p(A) = Z$ together with vertical morphisms over $A$ --- morphisms $\mor{f}{A}{B}$ such that $p(f) = \id{Z}$. If we assume a suitable variant of the Axiom of Choice in our meta set theory, we may associate with every object $A \in p_Y$ a cartesian morphism ${\mor{\lift{A}{j}}{j^*(A)}{A}}$. Then, there is a functor $\mor{j^*}{p_Y}{p_X}$ defined on objects as
$$A \mapsto j^*(A)$$
and on morphisms $A \to^{f} B$ as the unique factorisations of ${f \circ \lift{A}{j}}$ through cartesian morphism $\lift{B}{j}$:
$$\bfig
	\node c(-400, 200)[\catl{C}]
	\node b(-400, -100)[\catl{B}]
	\arrow[c`b;p]

	\node x1(0, 600)[j^*(A)]
	\node x2(0, 200)[j^*(B)]
	\node a(600, 600)[A]
	\node b(600, 200)[B]
	\arrow[a`b;f]
	\arrow/-->/[x1`x2;j^*(f)]
	\arrow[x1`a;\lift{A}{j}]
	\arrow[x2`b;\lift{B}{j}]

	\node j(0, -100)[X]
	\node k(600, -100)[Y]
	\arrow[j`k;j]
\efig$$
We call such defined functor $\mor{j^*}{p_Y}{p_X}$ the ``reindexing functor along $j$''. 

\begin{definition}[$p$-conservative morphism]\label{d:cartesian:extremal}
Let $\mor{p}{\catl{C}}{\catl{B}}$ be a functor. Assume that a morphism $\mor{j}{X}{Y} \in \catl{B}$ has cartesian liftings in $p$. We call $j$ a $p$-conservative morphism if its every lifting in $p$ is a $p$-conservatively cartesian morphism.
\end{definition}
From above definitions it follows, that $\word{cod}(\catl{B})$-conservative morphisms are precisely the morphisms that satisfy the other pullback lemma.

\section{Necessary and sufficient conditions}
\label{sec:proof}
A functor is conservative if it reflects isomorphisms. Let us observe that for reindexing functors, we can reformulate conservativity in a choice-free manner. If $\mor{p}{\catl{C}}{\catl{B}}$ is a functor and a morphism $\mor{e}{X}{Y} \in \catl{B}$ has cartesian liftings, then the reindexing functor $\mor{e^*}{p_Y}{p_X}$ is conservative iff for any pair of cartesian morphisms over $e$ with common codomain $\mor{f}{A}{B}$, $\mor{g}{A}{B'}$ if there is a vertical morphism $\mor{h}{B}{B'}$ such that $h \circ f = g$, then it is an isomorphism:
$$\bfig
	\node c(-400, 200)[\catl{C}]
	\node b(-400, -100)[\catl{B}]
	\arrow[c`b;p]

	\node x(0, 350)[A]
	\node a(600, 500)[B]
	\node b(600, 200)[B']
	\arrow[x`a;f]
	\arrow[x`b;g]
	\arrow[a`b;h]

	\node j(0, -100)[X]
	\node k(600, -100)[Y]
	\arrow[j`k;e]
\efig$$
In one direction it is obvious, because all reindexing functors over $e$ are naturally isomorphic to each other, therefore if one is conservative, then all are conservative, and from the above diagram there exists a reindexing functor which reindexes $\mor{h}{B}{B'}$ to the identity over $A$, thus $h$ is an isomorphism. In the other direction it follows from the definition of the reindexing functor. We leave it to a careful reader to reformulate the below two lemmata in a choice-free form.

\begin{lemma}
Let $\mor{p}{\catl{C}}{\catl{B}}$ be a fibration. The reindexing functor $\mor{e^*}{p_Y}{p_X}$ is conservative iff $\mor{e}{X}{Y}$ is $p$-conservative.
\end{lemma}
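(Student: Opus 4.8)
The plan is to establish the two implications separately, relying throughout on the choice-free reformulation of ``$e^*$ is conservative'' isolated just above---namely that for every pair of $p$-cartesian morphisms $\mor{f}{A}{B}$, $\mor{g}{A}{B'}$ over $e$ and every vertical morphism $\mor{h}{B}{B'}$ with $h \circ f = g$ the morphism $h$ is an isomorphism---together with Corollary \ref{c:cart:compose} (cartesian morphisms compose) and Corollary \ref{c:cart:factors} (stability of cartesianness under factorisation through cartesian morphisms). I will also use two elementary observations: every isomorphism of $\catl{C}$ is $p$-cartesian, and every $p$-cartesian morphism lying over an identity is an isomorphism (immediate from the uniqueness clause of Definition \ref{d:cartesian}).

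Suppose first that $\mor{e}{X}{Y}$ is $p$-conservative, and let $\mor{\alpha}{A}{A'}$ be a morphism of $p_Y$ with $e^*(\alpha)$ an isomorphism. By the defining property of the reindexing functor, $\alpha \circ \lift{A}{e} = \lift{A'}{e} \circ e^*(\alpha)$. The right-hand side is the composite of the cartesian morphism $\lift{A'}{e}$ with the morphism $e^*(\alpha)$, which is cartesian because it is invertible; hence it is cartesian by Corollary \ref{c:cart:compose}, so $\alpha \circ \lift{A}{e}$ is cartesian. Since $\lift{A}{e}$ is a cartesian lifting of the $p$-conservative morphism $e$, it is conservatively cartesian, and therefore $\alpha$ is cartesian; being vertical, $\alpha$ is an isomorphism. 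Thus $e^*$ reflects isomorphisms, and since all reindexing functors along $e$ are naturally isomorphic, this conclusion does not depend on the chosen cleavage.

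Conversely, suppose $e^*$ is conservative, and let $\mor{f}{A}{B}$ be an arbitrary cartesian lifting of $e$; I must show it is conservatively cartesian, i.e.\ that $g \circ f$ is cartesian iff $g$ is, for every $\mor{g}{B}{C}$. The direction ``$g$ cartesian $\Rightarrow g \circ f$ cartesian'' is Corollary \ref{c:cart:compose}. For the converse, assume $g \circ f$ is cartesian, set $k = p(g)$, and---using that $p$ is a fibration---pick a cartesian lifting $\mor{\bar{g}}{k^*(C)}{C}$ of $k$; its universal property factors $g$ as $\bar{g} \circ u$ for a unique vertical $\mor{u}{B}{k^*(C)}$. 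Then $\bar{g} \circ (u \circ f) = g \circ f$ is cartesian and factors through the cartesian morphism $\bar{g}$, so $u \circ f$ is cartesian by Corollary \ref{c:cart:factors}, and it lies over $\id{Y} \circ e = e$. Hence $f$ and $u \circ f$ are cartesian morphisms over $e$ sharing the domain $A$, and the vertical morphism $u$ satisfies $u \circ f = u \circ f$; by the conservativity criterion $u$ is an isomorphism, so $g = \bar{g} \circ u$ is cartesian by Corollary \ref{c:cart:compose}, as required.

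The one step that needs care is this last reduction: recognising that factoring $g$ through a cartesian lifting of $p(g)$ yields precisely the vertical comparison map the criterion expects, and that $u \circ f$ is again cartesian over $e$; everything else is bookkeeping with the two corollaries on cartesian morphisms. I would leave to the reader the routine task of rephrasing the whole argument without choice, in the manner indicated in the remark preceding the lemma.
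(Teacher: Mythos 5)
Your proof is correct and follows essentially the same route as the paper: for the nontrivial direction you factor $g$ through a cartesian lift of $p(g)$ and show the vertical comparison $u$ is invertible, exactly as the paper does with its $h$, only you route the conclusion through the choice-free conservativity criterion and Corollary \ref{c:cart:factors} instead of computing $e^*(h)$ explicitly. Your spelled-out converse is the direction the paper dismisses as obvious, and it is argued correctly.
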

\begin{proof}
First let us assume that $\mor{e^*}{p_Y}{p_X}$ is conservative. Consider a diagram in $\catl{C}$:
$$\bfig
	\node a(0, 0)[A]
	\node b(1200, 0)[B]
	\node c(1650, 0)[C]
	\arrow[a`b;f]
	\arrow[b`c;c]

	\node k(1200, 350)[p(c)^*(C)]
	\arrow[b`k;!\exists h]
	\arrow[k`c;\lift{C}{p(c)}]
	
	\node r(0, 350)[p(f)^*(p(c)^*(C))]
	\arrow[r`k;\lift{p(c)^*(C)}{p(f)}]
	\arrow[a`r;p(c)^*(h)]
\efig$$
where both $\mor{f}{A}{B}$ and $\mor{c \circ f}{A}{C}$ are cartesian, $\mor{c}{B}{C}$ is any morphism, and $\mor{h}{B}{p(c)^*(C)}$ is defined as the unique morphism from $c$ to the cartesian morphism $\lift{C}{p(c)}$. Because the square and the triangle commute, the whole diagram is commutative as well. Therefore, $p(c)^*(h)$ is the unique factorisation of cartesian $c \circ f$ through cartesian $\lift{C}{p(c)} \circ \lift{p(c)^*(C)}{p(f)}$, and so it is an isomorphism. Thus, by conservativity of reindexings, also $h$ is an isomorphism. Which means that $c$ is cartesian as the composition of a cartesian morphism with an isomorphism.

The other direction of the lemma is obvious.
\end{proof}
We say that for a functor $\mor{p}{\catl{C}}{\catl{B}}$, a morphism $\mor{e}{X}{Y} \in \catl{B}$ has op-cartesian liftings if $\mor{e^{op}}{Y}{X} \in \catl{B}^{op}$ has cartesian liftings in the opposite functor $\mor{p^{op}}{\catl{C}^{op}}{\catl{B}^{op}}$. For an object $A \in p_X$ we denote the op-cartesian lifting over $e$ by $\mor{\oplift{A}{e}}{A}{e_!(A)}$, and the associated op-reindexing functor by $\mor{e_!}{p_X}{p_Y}$. By universal properites of (op-)cartesian morphisms, a morphism $\mor{e}{X}{Y} \in \catl{B}$ that has cartesian liftings, also has op-cartesian liftings iff the reindexing functor $\mor{e^*}{p_Y}{p_X}$ has left adjoint $\mor{e_!}{p_X}{p_Y}$:
$$\hom_X(A, e^*(B)) \approx \hom_e(A, B) \approx \hom_Y(e_!(A), B)$$
where $\hom_e(A, B)$ consists of these morphisms from $\catl{C}$ that lay over $e$.

There is one well-known fact relating right adjoint functors and epimorphisms. Let $\mor{F}{\catl{D}}{\catl{C}} \vdash \mor{G}{\catl{C}}{\catl{D}}$ be an adjunction with $G$ right adjoint to $F$. Consider counit $\mor{\epsilon_A}{F(G(A))}{A}$ of the adjunction at $A$ and two morphisms $\mor{f,g}{X}{G(F(A))}$ such that: $f \circ \epsilon_A = g \circ \epsilon_B$. By transposition, we obtain morphism: $$(G(f) = G(g)) = (f \circ \epsilon_A = g \circ \epsilon_B)^\dag \colon G(X) \rightarrow G(A)$$
Therefore $G$ is faithful (i.e.~injective on morphisms) iff every counit is an epimorphism.
 
There is also another well-known ``fact'', which is widely used across research papers, that relates adjoint functors and strong epimorphisms --- a right adjoint is conservative iff the counits of the adjunction are strong epimorphisms. Unfortunately, the ``fact'' is all wrong as is stated and one may easily (yet not without much work) construct a counterexample (see Example 2.4 in \cite{counter}). The right version of the ``fact'' was first studied by John Isbell. Here we present the result described in \cite{kelly}.

\begin{lemma}\label{l:cons:extremal}
Let $\mor{p}{\catl{C}}{\catl{B}}$ be a functor and assume that a morphism ${\mor{e}{X}{Y}}$ form $\catl{B}$ has cartesian liftings and op-cartsian liftings. If the reindexing functor $\mor{e^*}{p_Y}{p_X}$ is conservative then the canonical morphisms ${e_!(e^*(A)) \rightarrow A}$ in $p_Y$ are extremal.
\end{lemma}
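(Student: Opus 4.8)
The plan is to unwind the adjunction $e_! \dashv e^*$ recorded above. Under the natural bijections $\hom_Y(e_!(A), B) \approx \hom_e(A, B) \approx \hom_X(A, e^*(B))$ the canonical morphism $e_!(e^*(A)) \to A$ is precisely the counit $\epsilon_A$ of this adjunction, so it obeys the triangle identity $e^*(\epsilon_A) \circ \eta_{e^*(A)} = \id{e^*(A)}$, where $\eta$ denotes the unit. I would fix an object $A \in p_Y$ together with an arbitrary factorisation $\epsilon_A = m \circ h$ inside the fibre $p_Y$, with $\mor{m}{M}{A}$ a monomorphism, and aim to show that $m$ is an isomorphism; since $e^*$ is conservative by hypothesis, it suffices to show that $e^*(m)$ is an isomorphism.

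The steps are then the following. First, $e^*$ is a right adjoint and therefore preserves monomorphisms, so $e^*(m)$ is a monomorphism in $p_X$. Second, applying $e^*$ to the factorisation gives $e^*(\epsilon_A) = e^*(m) \circ e^*(h)$, and combining this with the triangle identity yields $e^*(m) \circ \bigl( e^*(h) \circ \eta_{e^*(A)} \bigr) = \id{e^*(A)}$; hence $e^*(m)$ is a split epimorphism. Third, a morphism that is at once a monomorphism and a split epimorphism is an isomorphism, so $e^*(m)$ is an isomorphism. Finally, conservativity of $e^*$ forces $m$ itself to be an isomorphism. As the factorisation through $m$ was arbitrary, $\epsilon_A$ factors through no proper subobject of $A$ in $p_Y$, i.e.\ it is extremal.

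I do not expect a real obstacle here: the argument is purely formal manipulation of the adjunction, resting only on the standard facts that right adjoints preserve monomorphisms and that a monic split epimorphism is invertible, together with the identification of the canonical morphism $e_!(e^*(A)) \to A$ with the counit. The sole point demanding a little care is bookkeeping --- tracking that $h$, $m$, $e^*(h)$, $e^*(m)$ and $\eta_{e^*(A)}$ all sit in the intended fibres and compose as written --- which is immediate once the domains and codomains are spelled out. Note in particular that we only claim $\epsilon_A$ is extremal in the sense of the Preliminaries, not that it is an epimorphism, since the fibres need not possess equalisers.
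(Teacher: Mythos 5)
Your proposal is correct and follows essentially the same route as the paper's own proof: factor the counit $\epsilon_A = m \circ h$ with $m$ monic, apply $e^*$ (which preserves monomorphisms as a right adjoint), use the triangle identity to split $e^*(m)$, conclude $e^*(m)$ is an isomorphism, and invoke conservativity to make $m$ invertible. The bookkeeping and the final remark about extremality (rather than extremal epi) are consistent with the paper.
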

\begin{proof}
Let us assume that $\mor{e^*}{p_Y}{p_X}$ is conservative. We have to show that the counits $\mor{\epsilon_A}{e_!(e^*(A))}{A}$ of the adjunction are extremal. Let ${\epsilon_A = m \circ f}$ be any factorisation with $m$ being a monomorphism. Then because right adjoints preserve monomorphisms, $e^*(\epsilon_A) = e^*(m) \circ e^*(f)$ is a factorisation of $e^*(\epsilon_A)$ through subobject $e^*(m)$. By the triangle equality for the counit $e^*(\epsilon_A)$ is split by the unit $\eta_{e^*(A)}$ and so $e^*(m)$ is split by $e^*(f) \circ \eta_{e^*(A)}$. Therefore $e^*(m)$ is an isomorphism and by conservativity of $e^*$, $m$ is also an isomorphism.
\end{proof}

On condition the fibre $p_Y$ has equalisers, the following lemma is the converse of Lemma \ref{l:cons:extremal}.
\begin{lemma}\label{l:extremal:cons}
Let $\mor{p}{\catl{C}}{\catl{B}}$ be a functor and assume that a morphism ${\mor{e}{X}{Y}}$ form $\catl{B}$ has cartesian liftings and op-cartsian liftings. If the canonical morphisms ${e_!(e^*(A)) \rightarrow A}$ in $p_Y$ are extremal epimorphisms, then the reindexing functor $\mor{e^*}{p_Y}{p_X}$ is conservative.
\end{lemma}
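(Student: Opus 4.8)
The plan is to prove something slightly more precise than conservativity. Since $e$ has cartesian and op-cartesian liftings, $e^*$ has a left adjoint $e_!$; write $\epsilon$ for the counit of the adjunction $e_! \vdash e^*$, so that by hypothesis each $\mor{\epsilon_A}{e_!(e^*(A))}{A}$ is an extremal epimorphism in $p_Y$. I would show that any morphism $\mor{\phi}{A}{B}$ of $p_Y$ for which $e^*(\phi)$ is an isomorphism is at once a monomorphism and an extremal morphism. Granting that, the trivial factorisation $\phi = \phi \circ \id{A}$ exhibits $\phi$ as factoring through the subobject represented by the monomorphism $\phi$, and since $\phi$ is extremal that monomorphism --- namely $\phi$ itself --- is forced to be an isomorphism, which is exactly what conservativity asks for. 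So the work splits into two independent checks.

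For extremality I would argue from naturality of $\epsilon$. The naturality square at $\phi$ gives $\phi \circ \epsilon_A = \epsilon_B \circ e_!(e^*(\phi))$; since $e^*(\phi)$ is an isomorphism so is $e_!(e^*(\phi))$, hence the right-hand side is the extremal morphism $\epsilon_B$ precomposed with an isomorphism and is therefore itself extremal. I would then invoke the elementary observation that a left factor of an extremal composite is extremal: if $g \circ f$ is extremal and $g = m \circ h$ with $m$ a monomorphism, then $g \circ f = m \circ (h \circ f)$ forces $m$ to be an isomorphism. Taking $g = \phi$ and $f = \epsilon_A$ shows $\phi$ is extremal.

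For the monomorphism part I would first establish that $e^*$ is faithful. By hypothesis every counit $\epsilon_A$ is extremal, and since $p_Y$ has equalisers, every extremal morphism of $p_Y$ is an epimorphism (Section~\ref{sec:preliminaries}); hence all counits of $e_! \vdash e^*$ are epimorphisms, and by the remark preceding Lemma~\ref{l:cons:extremal} a right adjoint whose counits are all epimorphisms is faithful, so $e^*$ is faithful. A faithful functor reflects monomorphisms --- from $\psi \circ u = \psi \circ v$ one gets $e^*(\psi)\circ e^*(u) = e^*(\psi)\circ e^*(v)$, hence $e^*(u) = e^*(v)$, hence $u = v$ --- and $e^*(\phi)$, being an isomorphism, is in particular a monomorphism; therefore $\phi$ is a monomorphism, completing the argument.

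The step I expect to be the real obstacle is the monomorphism part, and in particular pinning down where the extra hypothesis ``$p_Y$ has equalisers'' is genuinely used: it is precisely what upgrades the merely extremal counits to epimorphisms, and hence delivers the faithfulness of $e^*$. Without equalisers the counits need not be epimorphic, $e^*$ need not be faithful, and the conclusion really can fail --- this is the gap in the naive folklore ``iff'' flagged just before Lemma~\ref{l:cons:extremal}. Everything else is formal bookkeeping with the counit and the triangle identities.
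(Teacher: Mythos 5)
Your proof is correct and follows essentially the same route as the paper's: naturality of the counit together with extremality of $\epsilon_B$ shows $\phi$ is extremal, epimorphic counits give faithfulness of $e^*$ and hence reflection of monomorphisms, and extremal plus mono yields isomorphism. One small remark: the lemma's hypothesis already states that the counits are extremal \emph{epimorphisms}, so your detour through equalisers in $p_Y$ to upgrade extremal to epi is redundant --- the equaliser condition in the surrounding text serves only to make this lemma the converse of Lemma~\ref{l:cons:extremal}, whose conclusion is merely extremality.
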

\begin{proof}
Let us assume that each counit $\mor{\epsilon_A}{e_!(e^*(A))}{A}$ is extremal epimorphism. We have to show that for every $\mor{f}{A}{B}$ if $e^*(f)$ is an isomorphism then $f$ is an isomorphism. Since any functor preserves isomorphisms if $e^*(f)$ is iso, then $e_!(e^*(f))$ is iso. By definition of the adjunction $f \circ \epsilon_A = \epsilon_B \circ F(G(f))$, so $f \circ \epsilon_A \circ F(G(f))^{-1} = \epsilon_B$ and thus $f$ is extremal. Indeed, if $f = m \circ h$, then $\epsilon_B = m \circ h \circ \epsilon_A \circ F(G(f))^{-1}$ so if $m$ is mono, $m$ has to be iso. On the other hand, each counit is also an epimorphism, so the right adjoint $e^*$ reflects monomorphisms. Therefore, if $e^*(f)$ is an isomorphism, then $f$ is both extremal and monomorphism, which means, it is an isomorphism.
\end{proof}

%\begin{corollary}
%Let $\mor{G}{\catl{C}}{\catl{D}}$ be a right adjoint functor. $G$ is faithful and conservative iff the counits of the adjunction are extremal epimorphisms.
%\end{corollary}

%\begin{lemma}\label{l:cons:extremal}
%Let $\mor{F}{\catl{D}}{\catl{C}} \vdash \mor{G}{\catl{C}}{\catl{D}}$ be an adjunction with $G$ right adjoint to $F$. If $G$ is conservative then each counit $\mor{\epsilon_A}{F(G(A))}{A}$ is an extremal morphism.
%\end{lemma}
%\begin{proof}
%Let ${\epsilon_A = m \circ f}$ be any factorisation with $m$ being a monomorphism. Then because right adjoints preserve monomorphisms, $G(\epsilon_A) = G(m) \circ G(f)$ is a factorisation of $G(\epsilon_A)$ through subobject $G(m)$. By the triangle equality for the counit $G(\epsilon_A)$ is split by the unit $\eta_{G(A)}$ and so $G(m)$ is split by $G(f) \circ \eta_{G(A)}$. Therefore $G(m)$ is an isomorphism and by conservativity of $G$, $m$ is also an isomorphism.
%\end{proof}

\begin{corollary}
Theorem \ref{t:the:other:pullback:lemma} holds.
\end{corollary}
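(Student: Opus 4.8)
The plan is to instantiate the abstract results of Sections~\ref{sec:characterisation} and~\ref{sec:proof} at the fundamental indexing $\mor{\word{cod}(\catl{B})}{\arrl{B}}{\catl{B}}$. Since $\catl{B}$ has pullbacks this functor is a fibration, its fibre over $Y$ is the slice $\catl{B}/Y$, and every morphism $\mor{e}{X}{Y}$ has cartesian liftings (pullbacks along $e$) and op-cartesian liftings (post-composition with $e$); hence $\mor{e_!}{\catl{B}/X}{\catl{B}/Y}$, ``compose with $e$'', is left adjoint to $\mor{e^*}{\catl{B}/Y}{\catl{B}/X}$, ``pull back along $e$''. First I would assemble the chain of equivalences already available: the other pullback lemma along $e$ holds iff $e$ is $\word{cod}(\catl{B})$-conservative (remark closing Section~\ref{sec:characterisation}), iff $e^*$ is conservative (first lemma of Section~\ref{sec:proof}, since $\word{cod}(\catl{B})$ is a fibration and $\word{cod}(\catl{B})$-conservativity of $e$ is exactly conservativity of the reindexing functor $e^*$), iff the counits $e_!(e^*(A)) \to A$ in $\catl{B}/Y$ are extremal (Lemma~\ref{l:cons:extremal} together with Lemma~\ref{l:extremal:cons}). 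The hypothesis of Lemma~\ref{l:extremal:cons} that $\catl{B}/Y$ have equalisers is met: $\catl{B}/Y$ has a terminal object $\id{Y}$ and has pullbacks (computed as in $\catl{B}$), hence all finite limits; and, $\catl{B}/Y$ having equalisers, every extremal morphism of $\catl{B}/Y$ is an epimorphism (Section~\ref{sec:preliminaries}), so ``extremal'' and ``extremal epimorphism'' agree there and the two lemmas dovetail with no gap.

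Next I would compute the counit explicitly. For an object $\mor{a}{A}{Y}$ of $\catl{B}/Y$ the object $e^*(A)$ is the projection $\mor{\pi_X}{X\times_Y A}{X}$, its cartesian lifting is the other projection $\mor{\pi_A}{X\times_Y A}{A}$, and $e_!(e^*(A))$ is $\mor{e\circ\pi_X = a\circ\pi_A}{X\times_Y A}{Y}$; unwinding the triangle identity identifies the counit $\epsilon_A$ with $\pi_A$, i.e.\ with the pullback of $e$ along $a$ (taking $a = \id{Y}$ gives back $e$ itself). Since the forgetful functor $\catl{B}/Y \to \catl{B}$ both preserves and reflects monomorphisms, it identifies subobjects of $(A\to Y)$ with subobjects of $A$, so a morphism of $\catl{B}/Y$ is extremal in $\catl{B}/Y$ exactly when it is extremal as a morphism of $\catl{B}$. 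Therefore ``all counits $\epsilon_A$ are extremal in $\catl{B}/Y$'' is precisely the statement that the pullback of $e$ along every $\mor{a}{A}{Y}$ is an extremal morphism of $\catl{B}$ --- that is, $e$ is an extremal morphism stable under pullbacks. This establishes the equivalence of the first and third conditions of Theorem~\ref{t:the:other:pullback:lemma}.

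It remains to bring in the strong-morphism formulation. Every strong morphism is extremal, and, as recalled in Section~\ref{sec:preliminaries}, in a category with enough pullbacks every extremal morphism is strong; since $\catl{B}$ has all pullbacks, ``strong'' and ``extremal'' coincide for morphisms of $\catl{B}$, and in particular for $e$ and for each of its pullbacks. Hence ``$e$ extremal and stable under pullbacks'' and ``$e$ strong and stable under pullbacks'' assert the same thing, and all three conditions of the theorem are equivalent.

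The conceptual work is entirely carried by Lemmas~\ref{l:cons:extremal} and~\ref{l:extremal:cons}, which are already proved; what remains is bookkeeping. The one step where care is genuinely needed --- and where a sloppy argument would fail --- is the matching between Lemma~\ref{l:cons:extremal}, which only produces ``extremal'', and Lemma~\ref{l:extremal:cons}, which requires ``extremal epimorphism'': one has to check that the fibre $\catl{B}/Y$ has equalisers under the theorem's hypothesis, so that the distinction between the two notions collapses.
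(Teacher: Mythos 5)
Your proposal is correct and follows essentially the same route as the paper's own (much terser) proof: instantiate the fibrational lemmas at the fundamental indexing $\word{cod}(\catl{B})$, identify the counits with pullbacks of $e$, note that extremal morphisms in $\catl{B}/Y$ are exactly those extremal in $\catl{B}$ and become extremal epimorphisms since the slice has equalisers, and collapse ``strong'' with ``extremal'' using the pullback hypothesis. The only difference is that you spell out the bookkeeping (the explicit counit computation and the extremal-versus-extremal-epi matching) that the paper leaves implicit.
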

\begin{proof}
In the fundamental indexing $\mor{\word{cod}(\catl{B})}{\arrl{B}}{\catl{B}}$, every morphism ${\mor{e}{X}{Y}}$ has op-cartesian liftings given by compositions. Therefore, if we assume that all pullbacks along $e$ exist, the reindexing functor $\catl{B}/Y \rightarrow \catl{B}/X$ exists and has left adjoint. Moreover, morphisms in $\catl{B}/Y$ are extremal precisely when they are extremal in $\catl{B}$, and since $\catl{B}/Y$ has equalisers, they are extremal epimorphisms.  
\end{proof}

%\section{Conclusions}
%\label{sec:conclusions}
%In this note we introduced the concept of \emph{conservativity} relative to a fibration. With help of this concept, we supplied two well-known pullback lemmata with the missing third pullback lemma, completing --- at least in case of categories with finite connected limits --- the picture:
%$$\bfig
%\place(200,150)[(I)]
%\place(600,150)[(II)]
%	\node a(0, 300)[A]
%	\node b(400, 300)[B]
%	\node c(800, 300)[C]
	
%	\node x(0, 0)[X]
%	\node y(400, 0)[Y]
%	\node z(800, 0)[Z]

%	\arrow[a`b;]
%	\arrow[b`c;]
	
%	\arrow[x`y;]
%	\arrow[y`z;]

%	\arrow[a`x;]
%	\arrow[b`y;]
%	\arrow[c`z;]
%\efig$$
%Now the pullback lemmata say:
%\begin{enumerate}
%\item if squares (I) and (II) are pullbacks, then the outer square is a pullback
%\item if the outer square and the right square (II) are pullbacks, then square (I) is a pullback
%\item (in a category with finite connected limits) if the outer square and the left square (I) are pullbacks, then the right square (II) is a pullback holds if and only if $X \rightarrow Y$ is an extremal morphism stable under pullbacks. 
%\end{enumerate}

%\begin{acknowledgements}
%If you'd like to thank anyone, place your comments here
%and remove the percent signs.
%\end{acknowledgements}

% BibTeX users please use one of
%\bibliographystyle{spbasic}      % basic style, author-year citations
%\bibliographystyle{spmpsci}      % mathematics and physical sciences
%\bibliographystyle{spphys}       % APS-like style for physics
%\bibliography{}   % name your BibTeX data base

\begin{thebibliography}{}
%
% and use \bibitem to create references. Consult the Instructions
% for authors for reference list style.
%
\bibitem{maclane}
S.~Mac Lane, \textit{Categories for the Working Mathematician}, Springer Verlag (1997)

\bibitem{elephant} P. T. Johnstone, \textit{Sketches of an Elephant: A Topos Theory Compendium}, Oxford University Press (2003)

\bibitem{borceux}
F.~Borceux, \textit{The Handbook of Categorical Algebra}, Cambridge University Press (1994)

\bibitem{fib}
B.~Jacobs, \textit{Categorical Logic and Type Theory}, Elsevier (2001)

\bibitem{paul}
P.~Taylor, \textit{Practical Foundations of Mathematics}, Cambridge University Press (1999)

\bibitem{counter}
R.~Borger, W.~Tholen, \textit{Strong regular and dense generators. Cahiers de Topologie et Geometrie Différentielle Categoriques}, 32(3) (1991)

\bibitem{kelly}
G.~Bin~Im and G.M.~Kelly, \textit{Some remarks on conservative functors with left adjoints}, Journal of the Korean Mathematical Society 23 (1986)

% Format for books
% etc
\end{thebibliography}

% Non-BibTeX users please use

\end{document}